\listfiles
\documentclass[fleqn,3p,12pt,times]{elsarticle}
\usepackage[utf8]{inputenc}
\usepackage{amssymb,latexsym}
\usepackage{amsfonts}
\usepackage{amsmath}
\usepackage{enumerate}
\usepackage{bbm}
\usepackage{xcolor}
\usepackage{booktabs}
\usepackage{float}
\usepackage{amssymb}
\usepackage{amsthm}
\usepackage{comment}
\usepackage{url}
\usepackage{enumitem}
\usepackage{natbib}
\usepackage{bm}
\usepackage{amsmath}

\makeatletter
\newcommand{\eqnum}{\leavevmode\hfill\refstepcounter{equation}\textup{\tagform@{\theequation}}}
\makeatother

\newtheorem{ex}{Example}
\newtheorem{thm}{Theorem}

\newcommand\dist{\buildrel d \over =}

\journal{arXiv}

\bibliographystyle{elsarticle-num}

\begin{document}
\begin{frontmatter}

\title{The limit law of certain discrete multivariate distributions}



\author{Andrius Grigutis\fnref{myfootnote1}}
\author{Artur Nakliuda\fnref{myfootnote2}}
\address{Institute of Mathematics, Vilnius University, Naugarduko 24, LT-03225 Vilnius}

\fntext[myfootnote1]{Email: andrius.grigutis@mif.vu.lt}
\fntext[myfootnote2]{Email: artur.nakliuda@mif.stud.vu.lt}

\begin{abstract}
Let $X_1,\,X_2,\,\ldots,\,X_N$, $N\in\mathbb{N}$ be independent but not necessarily identically distributed discrete and integer-valued random variables. Assume that $X_1\geqslant m_1$, $X_2\geqslant m_2$, $\ldots$, $X_N\geqslant m_N$ almost surely, where $m_1,\,m_2,\ldots,\,m_N$ are some integer numbers such that $m_1+m_2+\ldots+m_N<0$, and $X_k\dist X_{k+N}$, for all $k\in\mathbb{N}$ in the sequence $X_1,\,X_2,\,\ldots$ In this communication, we make use of some of the known results to provide the closed-form expression of the limit multivariate distribution function $\mathbb{P}(X_1\leqslant x,\,X_1+X_2\leqslant x,\,\ldots)$, $x\in\mathbb{Z}$ via: (1) inclusion-exclusion principle based product of the roots of $G_{N}(s)=1$, where $G_{N}(s)$ is the probability generating function of $S_N=X_1+X_2+\ldots+X_N$, (2) the probability mass function of $S_N$, and (3) the expectation $\mathbb{E}S_N$.
\end{abstract}

\begin{keyword}
multivariate distribution, limit law, random walk, Vandermonde matrix
\MSC[2020] 60E05, 60G50, 60J80.
\end{keyword}
\end{frontmatter}

This work consists of communication on the setup of the distribution function
\begin{align}\label{dist_f}
F_{\infty}(x):=\lim_{n\to\infty}\mathbb{P}(X_1\leqslant x,\,X_1+X_2\leqslant x,\,\ldots,\,X_1+X_2+\ldots+X_n\leqslant x),\,x\in\mathbb{Z}.
\end{align}

The main ideas in the setup of $\eqref{dist_f}$ stem from \cite{AIMS_Math} and \cite{Aims_PUQR}, where similar problems are considered in a rather applied context. Moreover, in \cite{AIMS_Math} and \cite{Aims_PUQR} there are more restrictive conditions (i.i.d.) assumed on the random variables $X_1,\,X_2,\,\ldots$, and the methods are more extensive compared to the ones presented here.

Here we fix some $N\in\mathbb{N}$ and assume that:\\
(a) $X_1,\,X_2,\,\ldots,\,X_N$ are discrete, integer-valued, and independent random variables that may be distributed differently.\\
(b) $\mathbb{P}\left(X_1\geqslant m_1,\,X_2\geqslant m_2,\,\ldots,\,X_N\geqslant m_N\right)=1$ and $\mathbb{P}\left(X_1=m_1,\,X_2=m_2,\,\ldots,\,X_N=m_N\right)$ $>0$, where  $m_1,\,m_2,\ldots,\,m_N$ are integers such that $-D:=m_1+m_2+\ldots+m_N<0$.\\
(c) $X_k\dist X_{k+N}$, for all $k\in\mathbb{N}$ in the sequence $X_1,\,X_2,\,\ldots$ (of course, $N=1$ refers to the i.i.d. case).

The definition of $F_{\infty}(x)$ and the above assumptions (a)--(c) imply that there is enough to consider $F_{\infty}(x)$ for $x\in\mathbb{Z}$ such that 
$$
x\geqslant M:=\max\{m_1,\,m_1+m_2,\,\ldots,\,m_1+m_2+\ldots+m_N\}\geqslant-D=m_1+m_2+\ldots+m_N
$$
because $F_{\infty}(x)=0$ always if $x<M$. We will see later on that $F_{\infty}(x)\not\equiv0$ if $\mathbb{E}(X_1+X_2+\ldots+X_N)<0$, except in some cases when all of the random variables $X_1,\,X_2,\,\ldots,\,X_N$ are degenerate. Let $S_N:=X_1+X_2+\ldots+X_N$ and denote:
\begin{align*}
f_N(x)&:=\mathbb{P}(S_N=x),\,x\geqslant-D,\\
F_N(x)&:=\mathbb{P}(S_N\leqslant x),\,x\geqslant-D.
\end{align*}
Then, the elementary rearrangements imply
\begin{align*}
F_{\infty}(x)&=\mathbb{P}\left(\bigcap_{n=1}^{\infty}\left\{\sum_{j=1}^{n}X_j\leqslant x\right\}\right)
=\mathbb{P}\left(\bigcap_{n=1}^{N}\left\{\sum_{j=1}^{n}X_j\leqslant x\right\} 
\cap
\bigcap_{n=N+1}^{\infty}\left\{\sum_{j=1}^{n}X_j\leqslant x\right\}\right)\\
&=\mathbb{P}\left(\bigcap_{n=1}^{N}\left\{\sum_{j=1}^nX_j\leqslant x\right\}\cap
\bigcap_{n=N+1}^{\infty}\left\{\sum_{j=N+1}^n X_j\leqslant x-\sum_{j=1}^N X_j\right\}\right)\\
&=\sum_{\substack{m_1\leqslant i_1\leqslant x\\m_2\leqslant i_1+i_2\leqslant x\\
\vdots \vspace{1mm} \\
m_N\leqslant i_1+i_2+\ldots+i_N\leqslant x}}\hspace{-5mm}\mathbb{P}(X_1=i_1)\mathbb{P}(X_2=i_2)\cdots\mathbb{P}(X_N=i_N)\,
F_\infty\left(x-\sum_{j=1}^Ni_j\right)
\end{align*}
and that yields 
\begin{align}
F_{\infty}(x)&=\sum_{j=-D}^{x}f_N(j)\,F_{\infty}(x-j)=\sum_{j=0}^{x+D}f_N(x-j)\,F_{\infty}(j),\,x\geqslant M,\,M\leqslant0,\label{eq:recurrence_M<0}\\
F_{\infty}(x)&=\sum_{j=-D}^{x-M}f_N(j)\,F_{\infty}(x-j)=\sum_{j=M}^{x+D}f_N(x-j)\,F_{\infty}(j),\,x\geqslant M>0.\label{eq:recurrence_M>0}
\end{align}

The obtained relations \eqref{eq:recurrence_M<0} and \eqref{eq:recurrence_M>0} may be recognized as discrete truncated versions of the Wiener-Hopf equation whose kernel is a probability density, see \cite[eq. (1.1)]{Spitzer_Duke}. 

The recurrence \eqref{eq:recurrence_M<0} implies that it is sufficient to know $F_{\infty}(0),\,F_{\infty}(1)$, $\ldots$, $F_{\infty}(D-1)$, since any other desired value of $F_{\infty}(x)$, $x\geqslant M$ can be computed using the same recurrence \eqref{eq:recurrence_M<0}. The same logic applies to the recurrence \eqref{eq:recurrence_M>0}, where it is sufficient to know
$F_{\infty}(M),\,F_{\infty}(M+1),\,\ldots,\,F_{\infty}(M+D-1)$. We now explain how to get the required initial values for the recurrence \eqref{eq:recurrence_M<0} and notice that the procedure is nearly identical when finding the initial values for \eqref{eq:recurrence_M>0}.

    Let $s\in\mathbb{C}$ and denote two generating functions 
\begin{align*}
G_N(s)&=\sum_{j=-D}^{\infty}s^jf_N(j),\,0<|s|\leqslant1,\\
\Xi(s)&=\sum_{j=0}^{\infty}s^{j}F_{\infty}(j),\,|s|<1.
\end{align*}
Then, recurrence \eqref{eq:recurrence_M<0} implies
$$
F_{\infty}(x-D)=\sum_{j=0}^{u}f_N(x-D-j)\,F_{\infty}(j),\,x\geqslant M+D,
$$
which yields the relation between the defined generating functions (\cite[eq. (3)]{Landriault}, \cite[eq. (13)]{GJ})
\begin{align*}
\Xi(s)&=\sum_{x=D}^{\infty}s^{x-D}F_{\infty}(x-D)=\sum_{x=D}^{\infty}s^{x-D}\left(\sum_{j=0}^{x}f_N(x-D-j)\,F_{\infty}(j)\right)\\
&=\sum_{x=0}^{\infty}\left(\sum_{j=0}^{x}f_N(x-D-j)\,F_{\infty}(j)\right)s^{x-D}
-\sum_{u=0}^{D-1}\left(\sum_{j=M}^{x}f_N(x-D-j)\,F_{\infty}(j)\right)s^{x-D}\\
&=\Xi(s)G_N(s)-\sum_{j=0}^{D-1}F_{\infty}(j)\sum_{x=j}^{D-1}f_N(x-D-j)\,s^{x-D},\,0<|s|<1,
\end{align*}
and that is
\begin{align}\label{eq:P-1}
\Xi(s)\left(s^D G_N(s)-s^D\right)=\sum_{j=0}^{D-1}F_{\infty}(j)\sum_{x=j}^{D-1}f_N(x-D-j)\,s^{x}, |s|<1.
\end{align}

We aim to evaluate \eqref{eq:P-1} by taking $s$ derivative of both sides and letting $s\to1$ afterward. 

    Let us denote the non-negative and integer-valued random variable 
$\xi$, whose probability mass is
\begin{align*}
f_{\infty}(0)&=\mathbb{P}(\xi=0)=F_{\infty}(0),\\
f_{\infty}(k)&=\mathbb{P}(\xi=k)=F_{\infty}(k)-F_{\infty}(k-1),\,k\in\mathbb{N}.
\end{align*}
It can be shown that $\mathbb{P}(\xi<\infty)=1$ if $\mathbb{E}S_N<0$, see \cite[Lem. 1]{GJS}.
Let $G_{\xi}(s)$ denote the $\xi$ probability-generating function
\begin{align*}
G_{\xi}(s)=\sum_{j=0}^{\infty}s^{j}f_{\infty}(j),\,|s|\leqslant1.
\end{align*}
Then, the generating functions $G_{\xi}(s)$ and $\Xi(s)$ are related as follows
\begin{align*}
\Xi(s)=\sum_{j=0}^{\infty}s^{j}F_{\infty}(j)=\sum_{j=0}^{\infty}s^{j}\sum_{k=0}^{j}f_{\infty}(k)=\sum_{k=0}^{\infty}f_{\infty}(k)\sum_{j=k}^{\infty}s^{j}=\frac{G_{\xi}(s)}{1-s},\,|s|<1.
\end{align*}
Thus, according to \eqref{eq:P-1}, we get
\begin{align}\label{eq:P-1_v2}
G_{\xi}(s)\left(s^D G_N(s)-s^D\right)=(1-s)\sum_{j=0}^{D-1}F_{\infty}(j)\sum_{x=j}^{D-1}f_N(x-D-j)\,s^{x},\,|s|\leqslant1.
\end{align}
Then, by taking $s$ derivative of both sides of the last equality (under $\mathbb{E}S_N<0$) and letting $s\to1$ afterward, we obtain
\begin{align}\label{main_eq_last}
\sum_{j=0}^{D-1}F_{\infty}(j)\,F_{N}(-j-1)=
-\mathbb{E}S_N,
\end{align}
see \cite[p. 5191]{AIMS_Math}, \cite[Lem. 5]{GJS}.

Let $\alpha_1,\,\alpha_2,\,\ldots,\,\alpha_{D-1}$ be the roots of $G_N(s)=1$ in $|s|\leqslant1,\,s\neq1$. There are exactly $D-1$ such roots counted with their multiplicities in the provided circle, see \cite[Lem. 3.3]{Aims_PUQR}, \cite[Prop. 2]{Landriault}, \cite[p. 719]{Monthly}, \cite[Sec. 4]{GJ}, and the left-hand-side of \eqref{eq:P-1_v2} vanishes if s is a root of $G_N(s)=1$. 

Let us assume that the roots $\alpha_1,\,\alpha_2,\,\ldots,\,\alpha_{D-1}$ of $G_N(s)=1$ are simple. By replicating the equation \eqref{eq:P-1_v2} over these roots and including \eqref{main_eq_last}, we obtain the following system of linear equations
\begin{align}\label{syst:main}
&\begin{pmatrix}
\sum\limits_{x=0}^{D-1}f_N(x-D)\alpha_1^x
&\sum\limits_{x=1}^{D-1}f_N(x-D-1)\alpha_1^x
&\ldots
&f_N(-D)\,\alpha_1^{D-1}\\
\sum\limits_{x=0}^{D-1}f_N(x-D)\alpha_2^x
&\sum\limits_{x=1}^{D-1}f_N(x-D-1)\alpha_2^x
&\ldots
&f_N(-D)\,\alpha_2^{D-1}\\
\vdots&\vdots&\ddots&\vdots\\
\sum\limits_{x=0}^{D-1}f_N(x-D)\alpha_{D-1}^x
&\sum\limits_{x=1}^{D-1}f_N(x-D-1)\alpha_{D-1}^x
&\ldots
&f_N(-D)\,\alpha_{D-1}^{D-1}\\
F_{N}(-1)&F_{N}(-2)&\ldots&f_{N}(-D)
\end{pmatrix}
\begin{pmatrix}
F_{\infty}(0)\\
F_{\infty}(1)\\
\vdots\\
F_{\infty}(D-1)
\end{pmatrix}\nonumber\\
&=
\begin{pmatrix}
0,\,
0,\,
\ldots,\,
0,\,
-\mathbb{E}S_N
\end{pmatrix}^T,
\end{align}
whose solution is:
\begin{align}
&F_{\infty}(0)=\frac{-\mathbb{E}S_N}{f_N(-D)}\prod_{j=1}^{D-1}\frac{\alpha_j}{\alpha_j-1},\label{sol_0}\\
&F_{\infty}(1)=-\frac{F_{N}(-D+1)}{f_N(-D)}F_{\infty}(0)\label{sol_1}\\
&-\frac{\mathbb{E}S_N}{f_N(-D)}\prod_{j=1}^{D-1}\frac{1}{\alpha_j-1}\left(\prod_{j=1}^{D-1}\alpha_j-\sum\limits_{1\leqslant j_1<\ldots<j_{D-2}\leqslant D-1}
\alpha_{j_1}\cdots\alpha_{j_{D-2}}\right),\nonumber\\
&F_{\infty}(2)=-\frac{F_{N}(-D+1)}{f_N(-D)}F_{\infty}(1)-\frac{F_{N}(-D+2)}{f_N(-D)}F_{\infty}(0)-\frac{\mathbb{E}S_N}{f_N(-D)}\prod_{j=1}^{D-1}\frac{1}{\alpha_j-1}\label{sol_2}\\
&\times\left(\prod_{j=1}^{D-1}\alpha_j-\sum_{1\leqslant j_1<\ldots<j_{D-2}\leqslant D-1}
\alpha_{j_1}\cdots\alpha_{j_{D-2}}
+\sum_{1\leqslant j_1<\ldots<j_{D-3}\leqslant D-1}
\alpha_{j_1}\cdots\alpha_{j_{D-3}}\right),\nonumber\\
&\,\,\vdots\nonumber\\
&F_{\infty}(D-1)=-\frac{1}{f_N(-D)}\sum_{i=1}^{D-1}F_{N}(-i)F_{\infty}(i)
-\frac{\mathbb{E}S_N}{f_N(-D)}\prod_{j=1}^{D-1}\frac{1}{\alpha_j-1}
\times\Bigg(\prod_{j=1}^{D-1}\alpha_j-\label{sol_D-1}\\
&\sum_{1\leqslant j_1<\ldots<j_{D-2}\leqslant D-1}
\alpha_{j_1}\cdots\alpha_{j_{D-2}}
+\sum_{1\leqslant j_1<\ldots<j_{D-3}\leqslant D-1}
\alpha_{j_1}\cdots\alpha_{j_{D-3}}+\ldots+(-1)^{D+1}\Bigg),\nonumber
\end{align}
see \cite[Thm. 3.3]{AIMS_Math}, \cite[Thm. 2.5]{Aims_PUQR}.

Let us return briefly to the initial values $F_{\infty}(M),\,F_{\infty}(M+1),\,\ldots,\,F_{\infty}(M+D-1)$ for the recurrence \eqref{eq:recurrence_M>0} when $M>0$. Following the above-listed argumentation, we shall define 
$$
\tilde{\Xi}(s)=\sum_{j=M}^{\infty}s^{j-M}F_{\infty}(j),\,|s|<1.
$$
Then, following the derivation of \eqref{eq:P-1}, we shall derive
\begin{align}\label{eq:gen_relation_v2}
\tilde{\Xi}(s)s^D\left(G_N(s)-1\right)=\sum_{j=M}^{M+D-1}F_{\infty}(j)\sum_{x=j}^{M+D-1}f_N(x-D-j)s^{x-M},\,|s|<1,
\end{align}
which, under the above-listed argumentation, yields an identical system to \eqref{syst:main} and its solution \eqref{sol_0}-\eqref{sol_D-1}, where $F_{\infty}(0),\,F_{\infty}(1),\,\ldots,\,F_{\infty}(D-1)$ gets replaced with $F_{\infty}(M)$, $F_{\infty}(M+1),\,\ldots,\,F_{\infty}(M+D-1)$ respectively. Note that if there are multiple roots among those $\alpha_1,\,\alpha_2,\,\ldots,\,\alpha_{D-1}$ we shall rebuild the system \eqref{syst:main} replacing its coefficients with derivatives, see \cite[Lem. 4.3]{AIMS_Math}.

Let us comment on the assumption of finite left supports of the random variables 
$X_1,\,X_2,\,\ldots,\,X_N$, i.e. $\mathbb{P}(X_1\geqslant m_1,\,X_2\geqslant m_2,\,\ldots,\,X_N\geqslant m_N)=1$. This assumption for the presented methods is ne\-cessary because if at least one $m_1,\,m_2,\,\ldots,\,m_N$ is $"-\infty"$, the sum in \eqref{eq:recurrence_M<0} or \eqref{eq:recurrence_M>0} runs to infinity too. Then the computation of $F_{\infty}(x)$ leads to the usage of the Pollaczek–Khinchine formula, see \cite[eq. (10)]{EMBRECHTS198255}. The Pollaczek–Khinchine formula, in turn, is based on the direct computation of the distribution functions $\mathbb{P}(X_1\leqslant x),\,\mathbb{P}(X_1+X_2\leqslant x),\,\ldots$, and the success of its usage becomes heavily dependent on the neatness of the distributions $X_1,\,X_2,\,\ldots,\,X_N$; see \cite{Willmot2017}.

    We summarize the listed thoughts on the distribution function 
$F_{\infty}(x),\,x\in\mathbb{Z}$ in the following theorem.
\begin{thm}\label{thm}
Let $X_1,\,X_2,\,\ldots,\,X_N$, $N\in\mathbb{N}$ be discrete, integer-valued, and independent random variables. Say that
$\mathbb{P}\left(X_1\geqslant m_1,\,X_2\geqslant m_2,\,\ldots,\,X_N\geqslant m_N\right)=1$ and \\$\mathbb{P}\left(X_1=m_1,\,X_2=m_2,\,\ldots,\,X_N=m_N\right)>0$, where  $m_1,\,m_2,\ldots,\,m_N$ are integers such that $m_1+m_2+\ldots+m_N=-D<0$.
Moreover, let $X_k\dist X_{k+N}$, for all $k\in\mathbb{N}$ in the sequence $X_1,\,X_2,\,\ldots$, and recall that:
\begin{align*}
M&=\max\{m_1,\,m_1+m_2,\,\ldots,\,m_1+m_2+\ldots+m_N\},\\ S_N&=X_1+\ldots+X_N,\\ 
f_N(k)&=\mathbb{P}(S_N=k),\,k\in\mathbb{Z},\\
G_N(s)&=s^{-D}f_N(-D)+s^{-D+1}f_N(-D+1)+\ldots,\,0<|s|\leqslant1,\,s\in\mathbb{C},\\ F_{\infty}(x)&=\mathbb{P}(X_1\leqslant x,\,X_1+X_2\leqslant x,\,\ldots),\, x\in\mathbb{Z}.
\end{align*}
Then:
\begin{align*}
&(\pmb{i})\,F_{\infty}(x)\equiv0,\textit{ if } \mathbb{E}S_N>0 \textit{ or } \mathbb{E}S_N=0 \textit{ with } \mathbb{P}(S_N=0)<1.\\
&(\pmb{ii})\, F_{\infty}(x)=0 \textit{ when } x<M \textit{ and } F_{\infty}(x)=1 \textit{ when } x\geqslant M, \textit{ if } \mathbb{E}S_N=0 \textit{ with } \mathbb{P}(S_N=0)=1.\\
&(\pmb{iii})\,F_{\infty}(x)=0 \textit{ for all } x<M; F_{\infty}(0),\,F_{\infty}(1),\,\ldots,\,F_{\infty}(D-1) \textit{ are given in } \eqref{sol_0}-\eqref{sol_D-1},\\
&\textit{ and any other value } F_{\infty}(u),\,u\geqslant M \textit{ is computed by } \eqref{eq:recurrence_M<0} \textit{ when } M\leqslant0,\, \mathbb{E}S_N<0 \textit{ and }\\
&\alpha_1,\,\alpha_2,\,\ldots\alpha_{D-1},\text{ are the simple roots of } G_N(s)=1 \textit{ in } |s|\leqslant1,\,s\neq1.\\
&(\pmb{iv}) \,F_{\infty}(x)=0 \textit{ for all } x<M; F_{\infty}(M),\,F_{\infty}(M+1),\,\ldots,\,F_{\infty}(M+D-1) \textit{ are computed by }\\
&\eqref{sol_0}-\eqref{sol_D-1} \textit{ when they replace } F_{\infty}(0),\,F_{\infty}(1),\,\ldots,\,F_{\infty}(D-1) \textit{ there respectively},\\
&\textit{ and any other value } F_{\infty}(x),\,x\geqslant M \textit{ is computed by } \eqref{eq:recurrence_M>0} \textit{ when } M>0,\, \mathbb{E}S_N<0 \textit{ and }\\
&\alpha_1,\,\alpha_2,\,\ldots\alpha_{D-1},\text{ are the simple roots of } G_N(s)=1 \textit{ in } |s|\leqslant1,\,s\neq1.\\
&(\pmb{v})\textit{ The generating function } \Xi(s)=F_{\infty}(0)+F_{\infty}(1)s+\ldots \text{ for } s \text{ such that } |s|<1,\\
&G_{N}(s)\neq1 \textit{ is given by } \eqref{eq:P-1} \textit{ when }
F_{\infty}(0),\,F_{\infty}(1),\,\ldots,\,F_{\infty}(D-1) \text{ are known, } M\leqslant0, \textit{ and }\\ 
&\mathbb{E}S_N<0.\\
&(\pmb{vi})\textit{ The generating function } \tilde{\Xi}(s)=F_{\infty}(M)+F_{\infty}(M)s+\ldots \text{ for } s \text{ such that } |s|<1,\\
&G_{N}(s)\neq1 \textit{ is given by } \eqref{eq:gen_relation_v2} \textit{ when }
F_{\infty}(M),\,F_{\infty}(M+1),\,\ldots,\,F_{\infty}(M+D-1) \text{ are known, }\\
&M>0, \textit{ and } \mathbb{E}S_N<0.
\end{align*}
\end{thm}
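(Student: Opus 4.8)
The plan is to read off (i)--(vi) from the computations already displayed, the genuinely new points being the two boundary regimes (i), (ii) and the remark that nothing in the derivation uses more than the law of $S_N$ together with the block structure $X_k\dist X_{k+N}$. Two reductions come first. Since $X_j\geqslant m_j$ a.s., if $n^\ast\in\{1,\ldots,N\}$ satisfies $m_1+\cdots+m_{n^\ast}=M$, then $\sum_{j=1}^{n^\ast}X_j\geqslant M$ a.s., hence $\sup_{n\geqslant1}\sum_{j=1}^nX_j\geqslant M$ a.s., so $F_\infty(x)=\mathbb{P}\!\left(\sup_{n\geqslant1}\sum_{j=1}^nX_j\leqslant x\right)=0$ whenever $x<M$; this is the first clause of (ii)--(iv). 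Also $F_\infty(x)$ exists in $[0,1]$ as the limit of the nonincreasing sequence of probabilities in \eqref{dist_f}.

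For (i), set $W_k:=S_{kN}=\sum_{j=(k-1)N+1}^{kN}X_j$; by independence of $X_1,X_2,\ldots$ and assumption (c), $(W_k)_{k\geqslant1}$ is a random walk with i.i.d.\ increments distributed as $S_N$. If $\mathbb{E}S_N>0$ (finite or $+\infty$), the strong law gives $W_k\to+\infty$ a.s.; if $\mathbb{E}S_N=0$ with $\mathbb{P}(S_N=0)<1$, the walk is recurrent (a centred non-degenerate one-dimensional random walk oscillates), so $\limsup_kW_k=+\infty$ a.s. Either way $\sup_kW_k=+\infty$ a.s., whence $F_\infty(x)\leqslant\mathbb{P}\!\left(\bigcap_k\{W_k\leqslant x\}\right)=0$ for all $x$, proving (i). For (ii), if $\mathbb{P}(S_N=0)=1$ then, by independence, $\sum_{j=1}^{N}\operatorname{Var}X_j=0$, so each $X_j$ is degenerate at $m_j$ with $m_1+\cdots+m_N=0$; the sequence $\sum_{j=1}^nX_j$ is then a fixed, $N$-periodic-in-$n$ sequence whose supremum equals $M$, giving $F_\infty(x)=\mathbf{1}_{\{x\geqslant M\}}$. (Under the standing hypothesis $D>0$ this case is empty; it is recorded only to cover the degenerate sum excluded by (b).)

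For (iii) and (v), the argument is precisely the chain already carried out: the rearrangement leading to \eqref{eq:recurrence_M<0}; the generating-function identity \eqref{eq:P-1}, which is exactly assertion (v) once one notes $s^D(G_N(s)-1)\neq0$ for $|s|<1$ with $G_N(s)\neq1$; its reformulation \eqref{eq:P-1_v2} via $G_\xi$; differentiation at $s=1$ under $\mathbb{E}S_N<0$, yielding \eqref{main_eq_last}; and evaluation of \eqref{eq:P-1_v2} at each of the $D-1$ simple roots $\alpha_1,\ldots,\alpha_{D-1}$ of $G_N(s)=1$ in $|s|\leqslant1$, $s\neq1$ (cited root count; $G_\xi$ is finite on the closed disc because $\xi$ is proper when $\mathbb{E}S_N<0$, see \cite[Lem. 1]{GJS}), which kills the left-hand side and produces the first $D-1$ rows of \eqref{syst:main}. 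To solve that system, write the right-hand polynomial of \eqref{eq:P-1} as $P(s)=\sum_{x=0}^{D-1}s^x\sum_{j=0}^{x}F_\infty(j)f_N(x-D-j)$; the map $(F_\infty(0),\ldots,F_\infty(D-1))\mapsto(\text{coefficients of }P)$ is lower triangular with every diagonal entry equal to $f_N(-D)=\mathbb{P}(X_1=m_1,\ldots,X_N=m_N)>0$, hence invertible, so \eqref{syst:main} is equivalent to prescribing $P(\alpha_1)=\cdots=P(\alpha_{D-1})=0$ and $P(1)=-\mathbb{E}S_N$; the coefficient matrix of \eqref{syst:main} factors as (Vandermonde on the distinct nodes $\alpha_1,\ldots,\alpha_{D-1},1$) times that triangular matrix, so it is invertible and $P$ is uniquely determined by Lagrange interpolation,
\[
P(s)=-\mathbb{E}S_N\prod_{i=1}^{D-1}\frac{s-\alpha_i}{1-\alpha_i}.
\]
Expanding $\prod_{i=1}^{D-1}(s-\alpha_i)$ in the elementary symmetric functions of $\alpha_1,\ldots,\alpha_{D-1}$, reading off the coefficient of $s^x$, solving the triangular relations $\sum_{j=0}^{x}F_\infty(j)f_N(x-D-j)=[s^x]P(s)$ for $F_\infty(0),F_\infty(1),\ldots$ in turn, and substituting $\sum_{y=-D}^{-1-j}f_N(y)=F_N(-j-1)$ where convenient, yields \eqref{sol_0}--\eqref{sol_D-1} (reproducing \cite[Thm. 3.3]{AIMS_Math}, \cite[Thm. 2.5]{Aims_PUQR}); every further value $F_\infty(u)$, $u\geqslant M$, then follows from \eqref{eq:recurrence_M<0}. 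Parts (iv) and (vi) are identical with $\Xi$, \eqref{eq:P-1}, \eqref{eq:recurrence_M<0} replaced by $\tilde\Xi$, \eqref{eq:gen_relation_v2}, \eqref{eq:recurrence_M>0}: the same root evaluations and differentiation at $s=1$ give a system of the shape \eqref{syst:main} with $F_\infty(0),\ldots,F_\infty(D-1)$ replaced by $F_\infty(M),\ldots,F_\infty(M+D-1)$, solved by \eqref{sol_0}--\eqref{sol_D-1} verbatim.

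I expect the only real obstacle to be the final step of (iii): converting the invertible Vandermonde-type system \eqref{syst:main} into the closed forms \eqref{sol_0}--\eqref{sol_D-1} for general $D$, and handling the multiple-root variant mentioned after \eqref{eq:gen_relation_v2} through confluent (derivative-row) Vandermonde matrices, see \cite[Lem. 4.3]{AIMS_Math}. The factorisation above makes existence and uniqueness of the solution transparent and reduces the rest to symmetric-function bookkeeping; the root-counting input and the differentiation identity \eqref{main_eq_last} are quoted and transfer unchanged since they depend only on the distribution of $S_N$.
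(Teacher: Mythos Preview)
Your proof is correct and follows essentially the same route as the paper: the paper's own proof is a few lines that invoke the strong law for (\textit{i}), the degeneracy observation for (\textit{ii}), and then simply says (\textit{iii})--(\textit{vi}) ``are implied by the means given before Theorem~\ref{thm}'', i.e.\ the derivations of \eqref{eq:recurrence_M<0}--\eqref{eq:gen_relation_v2} and the system \eqref{syst:main}, whose explicit solution \eqref{sol_0}--\eqref{sol_D-1} it delegates to \cite{AIMS_Math,Aims_PUQR}. Your write-up fills in more of that delegated step---the lower-triangular/Vandermonde factorisation and the Lagrange-interpolation identification $P(s)=-\mathbb{E}S_N\prod_i(s-\alpha_i)/(1-\alpha_i)$---which is a welcome elaboration rather than a different approach; one small slip to fix is the line $W_k:=S_{kN}=\sum_{j=(k-1)N+1}^{kN}X_j$, where the second equality should describe the \emph{increments} of the walk $(S_{kN})_{k\geqslant1}$, not $S_{kN}$ itself.
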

\begin{proof}
(\textit{\textbf{i}}) If $\mathbb{E}S_N>0$, then $F_{\infty}(x)\equiv0$ is implied by the strong law of large numbers, see \cite[Thm. 2]{GJS}, \cite[Prop. 7.23]{Resnic}. If $\mathbb{E}S_N=0$ and $\mathbb{P}(S_N=0)<1$, then $F_{\infty}(x)\equiv0$ can be derived by the means of \cite{Grigutis_Karbonskis_Šiaulys_2023}. 

(\textit{\textbf{ii}}) The provided conditions imply that $X_1=m_1$, $X_2=m_2$, $\ldots$, $X_N=m_N$ with probability one and $m_1+m_2+\ldots+m_N=0$. Thus, it is sufficient to establish the minimal threshold $x$ such that the conditions $X_1\leqslant x$, $X_1+X_2\leqslant x$, $\ldots$, $X_1+X_2+\ldots+X_N\leqslant x$ are all satisfied.

(\textit{\textbf{iii}})--(\textit{\textbf{vi}}) are implied by the means given before Theorem \ref{thm}.
\end{proof}
In this work, we provide two examples of the limit multivariate distribution function \eqref{dist_f} construction. All the necessary computations are performed using the software \cite{Mathematica}. 
\begin{ex}
Let $\mathbb{P}(X=-3)=\mathbb{P}(X=1)=1/2$ and say $X_1,\,X_2,\,\ldots$ are independent copies of $X$. Then
$$
\alpha_1=\frac{2-\left(1-i\sqrt{3}\right)\sqrt[3]{19-3\sqrt{33}}-\left(1+i\sqrt{3}\right)\sqrt[3]{19+3\sqrt{33}}}{6}\approx-0.419643-0.606291i
$$
and its conjugate $\alpha_2=\overline{\alpha}_1$ are the simple roots of
$$
\frac{1}{2s^3}+\frac{s}{2}=1.
$$
Then, according to Theorem \ref{thm},
\begin{align*}
F_{\infty}(x)&=0,\,x<-3,\\
F_{\infty}(-3)&=\frac{\alpha_1\alpha_2}{(\alpha_1-1)(\alpha_2-1)}\approx0.228155,\\
F_{\infty}(-2)&=\frac{-(\alpha_1+\alpha_2)}{(\alpha_1-1)(\alpha_2-1)}\approx0.352201,\\
F_{\infty}(-1)&=\frac{1}{(\alpha_1-1)(\alpha_2-1)}\approx 0.419643,\\
F_{\infty}(0)&=\frac{2\alpha_1\alpha_2}{(\alpha_1-1)(\alpha_2-1)}\approx 0.456311,\\
F_{\infty}(1)&=\frac{-2(\alpha_1+\alpha_2)}{(\alpha_1-1)(\alpha_2-1)}\approx 0.704402,\\
F_{\infty}(2)&=\frac{2}{(\alpha_1-1)(\alpha_2-1)}\approx0.839287,
\end{align*}
and we may proceed using the recurrence \eqref{eq:recurrence_M<0}
\begin{align*}
F_{\infty}(x)=2\left(F_{\infty}(x-3)-\sum_{j=0}^{x-1}f_1(x-3-j)F_{\infty}(j)\right),\,x\geqslant0,
\end{align*}
where $\sum_{j=0}^{-1}:=0$.
    In this example, the random values of the partial sums $\{X_1,\,X_1+X_2,\,\ldots\}$ 
can be illustrated as follows 
\begin{align*}
&\hspace{6cm}-3 \quad {\color{red} 1} \\
&\hspace{5.5cm}-6 \quad -2 \quad {\color{red} 2} \\ 
&\hspace{5cm}-9\quad -5 \quad -1 \quad {\color{red} 3} \\ 
&\hspace{4.5cm}-12\quad -8\quad -4\quad {\color{red}0}\quad {\color{red} 4} \\
&\hspace{4cm}-15\quad -11 \quad -7 \quad -3 \quad {\color{red} 1}\quad {\color{red} 5}\\
&\hspace{3.5cm}-18\quad -14 \quad -10 \quad -6 \quad -2\quad {\color{red} 2}\quad {\color{red} 6}\\
&\hspace{3cm}-21\quad -17 \quad -13 \quad -9 \quad -4\quad -1\quad {\color{red} 3}\quad {\color{red} 7}\\
&\hspace{2.5cm}-24\quad -20 \quad -16 \quad -12 \quad -8\quad -4\quad {\color{red} 0}\quad {\color{red} 4}\quad {\color{red} 8}\\
&\hspace{7cm} \ldots
\end{align*}
where the distributed underlying probabilities follow the binomial law $(1/2+1/2)^n,\,n\in\mathbb{N}$. For instance, $F_{\infty}(-1)\approx 0.419643$ is the probability that we never pick up the red number according to the decision of the coin flip at every step when moving downwards (ultimately) from the top in the provided triangle. 
\end{ex}

\begin{ex}
Let
\begin{align*}
\mathbb{P}(X=k)&=0.55(0.45)^{k-1},\,k=1,\,2,\,\ldots,\\
\mathbb{P}(Y=k)&=e^{-1/2}\frac{(1/2)^{k+3}}{(k+3)!},\,k=-3,\,-2,\,\ldots,\\
\mathbb{P}(Z=k)&=e^{-k}-e^{-k-1},\,k=0,\,1,\,\ldots
\end{align*}
Here the random variable $X$ can be recognized as geometric with parameter $p=0.55$, $Y$ as shifted Poisson with parameter $\lambda=1/2$, and $Z$ as discrete Weibull with unit parameters. We suppose that $X_1,\,X_4,\,X_7,\,\ldots$ are independent copies of $X$; $X_2,\,X_5,\,X_8,\,\ldots$ are independent copies of $Y$;
$X_3,\,X_6,\,X_{9},\,\ldots$ are independent copies of $Z$. Then, $\mathbb{E}S_3=\mathbb{E}(X+Y+Z)\approx-0.10178<0$ and
$$
\frac{0.55}{1-0.45s}\cdot\frac{e^{-1/2+s/2}}{s^2}\cdot\frac{e-1}{e-s}=1
$$
has a simple root $s\approx-0.364796=:\alpha$. The distribution function of interest is:
\begin{align*}
F_{\infty}(x)&=0,\,x<1,\\
F_{\infty}(1)&=\frac{\mathbb{E}S_3}{f_3(-2)}\cdot\frac{\alpha}{\alpha-1}\approx 0.129012,\\
F_{\infty}(2)&=-\frac{F_3(-1)}{f_3(-2)}\cdot F_{\infty}(1)-\frac{\mathbb{E}S_3}{f_3(-2)}\approx0.183634,
\end{align*}
and we may proceed using the recurrence \eqref{eq:recurrence_M>0}
\begin{align*}
F_{\infty}(x)=\frac{1}{f_3(-2)}\left(F_{\infty}(x-2)-\sum_{j=1}^{x-1}f_3(x-2-j)F_{\infty}(j)\right),\,x\geqslant 3.
\end{align*}
\end{ex}

See \cite[Sec. 6]{AIMS_Math} and \cite[Sec. 5]{Aims_PUQR} for more related examples.

\bibliography{mybibfile} 

\end{document}